\newtheorem{theorem}{Theorem}
\newtheorem{lemma}{Lemma}
\renewenvironment {proof} {\begin{trivlist} \item[\hspace{\labelsep}%
\emph {Proof.}]}{$\Box$ \end{trivlist}}
\begin{document}
\title{Analog of the mean-value theorem for polynomials of special form}
\author{O.D. Trofymenko}

\address{Faculty of Mathematics and Information Technology, Donetsk National University, Universitetskaya str.~24, 83001, Donetsk,  Ukraine}
 \email{ol4anovskiy@gmail.com}

\maketitle

\begin{abstract}
 In the present paper a new mean value theorem for polynomials of
special form is obtained. The case of sums on vertices of a regular
polygon is studied. A criterion  for a certain equation to be
satisfied is obtained.
\end{abstract}

\section{Introduction}
   The classical Gaussian theorem characterizes the class of harmonic functions by the mean value formula.
 This theorem has received further development and elaboration in many papers (see, for example, reviews ~\cite{V1}, ~\cite{V2}
 and monographs ~\cite{V3}, ~\cite{V4} with extensive bibliography).
 One of the main ways in
 this  study is a description for classes of functions. This classes satisfy given integral equations,
 that have certain geometric meaning. There are mean value theorems that characterize harmonic
 polynomials ~\cite{V5}, bianalytic functions ~\cite{V6}, a solution
 of convolution equations with finite convolver and others (see ~\cite{V7}).
 In addition, similar results are very important in integral geometry
 and  various applications (see ~\cite{V3}).

   In this paper the mean value theorem for polyanalytic polynomials
   of a special form is obtained. A feature of it is function's
   value on vertices of a regular polygon in the mean value formula (see below).
   Also there is a value of some differential operator in the center
   of this polygon.

    We need the following notation for accurate formulation of the
    main result.

   Assume that $s,h\in{\mathbb{Z}}$,
$n,m\in{\mathbb{N}}$,  $n\geq{3}$,
 $0\leq{h}<n-s$,  $0\leq{s}\leq{m-1}$,  $q=\min\{h+s,m-1\}$.

Let $B_{\mathcal{R}}$ be the disk in $\mathbb{C}$ centered at origin
with radius $\mathcal{R}$. Denote by
$\zeta_\nu=Re^{i\frac{2\pi\nu}{n}}$ ($\nu=1,...,n$) the vertices of
a regular $n$-gon with circumradius $R$ and inscribed radius $r$.

Let  $\mathcal{R}_\ast(n,R,r)=
  \begin{cases}
    \sqrt{5R^2+4rR} & \text{where n is odd}, \\
     \sqrt{8R^2+R^4/r^2} & \text{where n is even}.

  \end{cases}
$

\section{Formulation of the Main Result}

 \begin{theorem}\label{t1}

Let $\mathcal{R}>\frac{1}{2}\mathcal{R}_\ast(n,R,r)$ and
$f\in{C^q}(B_{\mathcal{R}})$. Then the next statements are
equivalent.

1) For all $0\leq\alpha\leq2\pi$, $z+\zeta_\nu
e^{i\alpha}\in{B_{\mathcal{R}}}$, $\nu\in{\{1,...,n\}}$ there holds
\begin{equation}\label{eq1}
 \overset{n}{\underset{\nu=1}{\sum}}{(\zeta_\nu e^{i\alpha})}^sf(\zeta_\nu
e^{i\alpha}+z)=\overset{q}{\underset{p=s}{\sum}}\frac{nR^{2p}}{(p-s)!p!}{\left(\frac{\partial}{\partial{z}}\right)}^{p-s}{\left(\frac{\partial}{\partial{\bar{z}}}\right)}^pf(z).
\end{equation}

2) The function $f$ has the following form
\begin{equation}\label{eq2}
f(z)=\overset{h}{\underset{k=0}{\sum}}{\overset{m-1}{\underset{l=0}{\sum}}}c_{k,l}z^k\bar{z}^l,
\end{equation}
where $c_{k,l}$ are some constants.
\end{theorem}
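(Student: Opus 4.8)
The plan is to prove the two implications separately, treating $2)\Rightarrow 1)$ as a direct verification and $1)\Rightarrow 2)$ as the substantive part. For $2)\Rightarrow 1)$, by linearity it suffices to test a single monomial $f(z)=z^k\bar z^l$ with $0\le k\le h$, $0\le l\le m-1$. Writing $\xi_\nu=\zeta_\nu e^{i\alpha}=R\omega^\nu e^{i\alpha}$ with $\omega=e^{2\pi i/n}$ and expanding $(\xi_\nu+z)^k(\bar\xi_\nu+\bar z)^l$ by the binomial theorem, the left-hand side of \eqref{eq1} becomes a double sum in which each term carries a factor $\sum_{\nu=1}^n\omega^{(s+a-b)\nu}$, where $a,b$ are the binomial indices; this factor equals $n$ when $n\mid(s+a-b)$ and $0$ otherwise. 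The constraint $0\le h<n-s$ forces $0\le s+a\le s+k<n$, so $s+a-b<n$ always, ruling out the value $+n$, while the values $s+a-b=-n,-2n,\dots$ would require a $\bar z$-power of degree at least $s+n$, exceeding $m-1$ in the regime under consideration. Hence only $b=s+a$ survives; setting $p=s+a$ and comparing the resulting binomial coefficients with those produced by $\partial_z^{p-s}\partial_{\bar z}^p(z^k\bar z^l)$ shows that the surviving terms reproduce exactly the $p$-th summand on the right of \eqref{eq1}, which is also why that sum stops at $q=\min\{h+s,m-1\}$.

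For $1)\Rightarrow 2)$ the first step is to exploit that the right-hand side of \eqref{eq1} does not depend on $\alpha$. Thus $G(z,\alpha):=\sum_{\nu=1}^n\xi_\nu^s f(\xi_\nu+z)$ is constant in $\alpha$, and I would extract this information through its Fourier coefficients in $\alpha$. Substituting $\theta=2\pi\nu/n+\alpha$ in each summand turns the $\nu$-sum into a Gauss sum $\sum_\nu\omega^{N\nu}$, so the $N$-th Fourier coefficient of $G$ vanishes unless $n\mid N$ and otherwise equals $nR^s\,M_{N-s}[f](z,R)$, where $M_k[f](z,R)=\frac1{2\pi}\int_0^{2\pi}e^{-ik\theta}f(z+Re^{i\theta})\,d\theta$ is the $k$-th circular moment of $f$. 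Constancy in $\alpha$ therefore makes \eqref{eq1} equivalent to the system consisting of the diagonal identity from $N=0$, namely $R^s M_{-s}[f](z,R)=\sum_{p=s}^q\frac{R^{2p}}{(p-s)!p!}\partial_z^{p-s}\partial_{\bar z}^p f(z)$, together with the annihilation conditions $M_{nt-s}[f](z,R)=0$ for every $t\in\mathbb Z\setminus\{0\}$, all valid for $z$ in the region where the rotated polygon fits inside $B_{\mathcal R}$.

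The heart of the argument is to deduce from the infinite family of annihilation conditions that $f$ is a polynomial in $z$ and $\bar z$, and then to read off the degree bounds. Morally, $M_k[f](\cdot,R)$ is the convolution of $f$ with a circle measure whose Fourier symbol is a Bessel function $J_k(R|\cdot|)$; requiring $M_k[f]=0$ for all $k$ in the full arithmetic progression $k\equiv -s\pmod n$, $k\ne -s$, confines the spectrum of $f$ to the common zero set of infinitely many Bessel functions of distinct orders, namely $\{0\}$, so that $f$ must be a polynomial. Since $f$ is defined only on $B_{\mathcal R}$ and is only of class $C^q$, this spectral heuristic must be replaced by a genuine local argument, and this is exactly where the hypothesis $\mathcal R>\tfrac12\mathcal R_\ast(n,R,r)$ enters: the threshold $\mathcal R_\ast$ is the smallest circumradius for which the rotated copies of the polygon sweep out enough of $B_{\mathcal R}$ for the circular-moment conditions to propagate and pin $f$ down. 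I expect this passage, from single-radius moment vanishing on a restricted region to global polynomiality with the sharp constant, to be the main obstacle.

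Once $f$ is known to be a finite polynomial $\sum_{k,l}c_{k,l}z^k\bar z^l$, I would substitute it back and re-run the computation of the first part. The terms that survive the Gauss sum and carry $b\ne s+a$ are $\alpha$-dependent, hence must cancel identically; tracking these cancellations forces $c_{k,l}=0$ whenever $k>h$ or $l>m-1$, while the diagonal identity and the reduction of the right-hand side to the range $s\le p\le q$ are then automatic. This yields precisely the representation \eqref{eq2} and closes the equivalence.
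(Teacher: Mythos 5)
Your sufficiency argument is correct and is essentially the paper's own (Lemma~\ref{l1} with $\beta=0$): binomial expansion, the Gauss sum $\sum_{\nu=1}^n e^{i(s+a-b)2\pi\nu/n}$, and the arithmetic constraints $0\le h<n-s$, $b\le m-1$ eliminating every residue class except $b=s+a$, after which the surviving terms assemble into the right-hand side of \eqref{eq1} with $q=\min\{h+s,m-1\}$. Your first step in the necessity direction is also sound: since the right-hand side of \eqref{eq1} is independent of $\alpha$, expanding the left-hand side in a Fourier series in $\alpha$ does split \eqref{eq1} into a diagonal identity (the $N=0$ coefficient) plus the annihilation conditions $M_{nt-s}[f](z,R)=0$ for $t\in\mathbb{Z}\setminus\{0\}$.

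The genuine gap is the step you yourself call ``the main obstacle'': deducing from these single-radius circular-moment conditions, for an $f$ that is merely $C^q$ on the disk $B_{\mathcal R}$, that $f$ is a polynomial. You offer only the spectral heuristic (the common zero set of the Bessel symbols $J_k(R|\cdot|)$ over the arithmetic progression of orders is $\{0\}$), and you correctly observe that this heuristic is not valid in a local, finitely smooth setting --- but you never replace it with an argument, so the implication $1)\Rightarrow 2)$ is not proved. This is exactly the point where the paper does its real work, and by a different route: it expands $f$ in a Fourier series in the angular variable $\varphi$ of $z$ itself and shows \eqref{eq1} passes to each mode $f_k(\rho)e^{ik\varphi}$ (Lemma~\ref{l2}); for the radial part $f_0$ it invokes a theorem on convolution equations from Volchkov's monograph [3, Part 4, Th.~3.2] --- this citation is what consumes the hypothesis $\mathcal R>\frac12\mathcal R_\ast(n,R,r)$ --- to produce a polynomial $P$ with $P(\triangle)f_0=0$ in $B_{\mathcal R}$; it then kills the nonzero eigenvalues by showing that neither $c\,N_0(\lambda|z|)$ nor $c\,J_0(\lambda|z|)$ can satisfy \eqref{eq1} unless $c=0$ (Lemmas 3 and 5, via the plane-wave expansion of Lemma 4), which forces $f_0$ to be a polynomial in $|z|^2$ (Lemma~\ref{l4}); finally it propagates this to all modes through the first-order recursions $f_j'\pm j f_j/\rho=f_{j\mp 1}$ of Lemma~\ref{l5}, solving the resulting ODEs, discarding the singular solutions $c/\rho^{j}$ by substitution into \eqref{eq1}, and running the case analysis $h\le m-1$ versus $h>m-1$ to obtain exactly the degree bounds in \eqref{eq2}. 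To complete your outline you would need either to prove an analogue of that convolution-equation theorem for your moment operators $M_{nt-s}$ or to cite it as the paper does; without that, what you have is a correct reformulation plus a plan, not a proof.
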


\section{Auxiliary Statements}
\begin{lemma}\label{l1} Let a function $f(z)$ have the form (\ref{eq2}).

 Then the following equality holds
\begin{equation}\label{eq3}
\overset{n}{\underset{\nu=1}{\sum}}{(\zeta_\nu e^{i\alpha})}^sf(\zeta_\nu
e^{i(\alpha+\beta)}+z
e^{i\beta})=\overset{q}{\underset{p=s}{\sum}}\frac{nR^{2p}}{(p-s)!p!}{\left(\frac{\partial}{\partial{z}}\right)}^{p-s}{\left(\frac{\partial}{\partial{\bar{z}}}\right)}^pf(z
e^{i\beta}),
\end{equation}
where  $0\leq\alpha\leq2\pi$, $0\leq\beta\leq2\pi$,
$ze^{i\beta}+\zeta_\nu e^{i(\alpha+\beta)}\in{B_{\mathcal{R}}}$.
\end{lemma}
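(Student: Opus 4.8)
The plan is to exploit the fact that both sides of (\ref{eq3}) are linear in $f$, hence linear in the coefficients $c_{k,l}$ of (\ref{eq2}); it therefore suffices to verify (\ref{eq3}) for a single monomial $f(z)=z^k\bar{z}^l$ with $0\le k\le h$ and $0\le l\le m-1$, and then sum. Fixing such a monomial, I would write $\zeta_\nu=Re^{i2\pi\nu/n}$ and expand the summand on the left by the binomial theorem: with $A=\zeta_\nu e^{i(\alpha+\beta)}$ and $B=ze^{i\beta}$,
\[
(\zeta_\nu e^{i\alpha})^s(A+B)^k(\bar{A}+\bar{B})^l=\sum_{a=0}^{k}\sum_{b=0}^{l}\binom{k}{a}\binom{l}{b}(\zeta_\nu e^{i\alpha})^sA^a\bar{A}^bB^{k-a}\bar{B}^{l-b}.
\]
Collecting the $\nu$-dependent phase, the factor $\zeta_\nu^{s+a}\bar{\zeta}_\nu^{b}=R^{s+a+b}e^{i(s+a-b)2\pi\nu/n}$ appears, while the remaining phases in $\alpha,\beta$ organize into $e^{i(s+a-b)\alpha}e^{i(k-l)\beta}$, the $\beta$-exponent being independent of $a,b$.

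The decisive step is the summation over $\nu$. Using the root-of-unity identity $\sum_{\nu=1}^{n}e^{ij\cdot 2\pi\nu/n}=n$ when $n\mid j$ and $0$ otherwise, only those pairs $(a,b)$ with $s+a-b\equiv 0\pmod n$ survive, each contributing the factor $n$. Here the standing hypotheses enter: since $0\le a\le k\le h$ together with $h<n-s$ force $0\le s+a\le n-1$, and $0\le b\le l\le m-1$, the congruence $s+a\equiv b$ selects $b=s+a$ as the only admissible value in the relevant range, and in particular annihilates every $\alpha$-dependent term, matching the $\alpha$-independence of the right-hand side. On this surviving diagonal one has $s+a-b=0$, so $R^{s+a+b}=R^{2(s+a)}$, and the left-hand side collapses to
\[
n\,e^{i(k-l)\beta}\sum_{a=0}^{\min\{k,\,l-s\}}\binom{k}{a}\binom{l}{s+a}R^{2(s+a)}z^{k-a}\bar{z}^{l-s-a}.
\]

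It remains to show that the right-hand side of (\ref{eq3}) equals the same expression. Reading the operator $(\partial/\partial z)^{p-s}(\partial/\partial\bar{z})^p$ as acting on $z\mapsto f(ze^{i\beta})=z^k\bar{z}^l e^{i(k-l)\beta}$, a direct differentiation gives, for each $p$,
\[
\frac{nR^{2p}}{(p-s)!\,p!}\,e^{i(k-l)\beta}\frac{k!}{(k-p+s)!}\frac{l!}{(l-p)!}\,z^{k-p+s}\bar{z}^{l-p}.
\]
Substituting $p=s+a$ and reducing the factorials to binomial coefficients turns this into the $a$-th term of the left-hand sum above; the upper cutoff $p\le q=\min\{h+s,m-1\}$ is harmless, since the binomial factors $\binom{k}{a}$ and $\binom{l}{s+a}$ already vanish once $a>\min\{k,l-s\}$, so the two index ranges coincide. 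Summing over $k,l$ with weights $c_{k,l}$ then recovers (\ref{eq3}). I expect the main obstacle to be precisely the modular and range bookkeeping of the middle paragraph: one must check with care that the standing constraints on $s,h,m,n$ confine $s+a-b$ to a window in which $0$ is the only attainable multiple of $n$, because any stray surviving pair would inject spurious $\alpha$-dependence and destroy the identity.
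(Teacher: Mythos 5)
Your proof is correct and takes essentially the same route as the paper's: binomial expansion of the monomials, the roots-of-unity identity over $\nu$ to annihilate all terms with $s+a-b\not\equiv 0\ (\mathrm{mod}\ n)$, a range argument selecting the diagonal $b=s+a$, and then term-by-term matching with the derivative side; your preliminary reduction to a single monomial versus the paper's direct expansion of the full sum with coefficients $c_{k,l}$ is only a cosmetic difference. The range-bookkeeping obstacle you flag is real, but it is equally present in the paper's own proof (whose stated estimate $1-m\leq q_1n\leq m-1+n$ does not by itself force $q_1=0$): both arguments tacitly need $m\le n$, since otherwise $b=s+a+n$ can also survive and inject spurious terms.
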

\begin{proof}
We have the following
\[
e^{i\alpha
s}\overset{n}{\underset{\nu=1}{\sum}}\zeta_\nu^sf(\zeta_\nu
e^{i(\alpha+\beta)}+z
e^{i\beta})=\overset{n}{\underset{\nu=1}{\sum}}\overset{h}{\underset{k=0}{\sum}}{\overset{m-1}{\underset{l=0}{\sum}}}\overset{k}{\underset{j=0}{\sum}}{\overset{l}{\underset{p=0}{\sum}}}c_{k,l}C_k^jC_l^pe^{i\alpha
s}\zeta_\nu^{s+j}\bar{\zeta_\nu}^p e^{i\alpha (j-p)} e^{i\beta k}e^{-i\beta
l}z^{k-j}\bar{z}^{l-p}.
\]
Separating the concerns of the $z$ and  $\zeta_\nu$, we get
\[
e^{i\alpha
s}\overset{h}{\underset{j=0}{\sum}}{\overset{m-1}{\underset{p=0}{\sum}}}e^{i\alpha (j-p)}\frac{1}{j!p!}{\left(\frac{\partial}{\partial{z}}\right)}^{j}{\left(\frac{\partial}{\partial{\bar{z}}}\right)}^pf(ze^{i\beta})\overset{n}{\underset{\nu=1}{\sum}}R^{s+j+p}e^{i(s+j-p)\frac{2\pi\nu}{n}}=0,
\]
except the case, where  $s+j-p=q_1n$, $q_1\in\mathbb{Z}$ .

Let us estimate $q_1n$:
\[
1-m\leq{q_1n}\leq{m-1+h}\leq{m-1+n-s}\leq{m-1+n}.
\]
This yields: $q_1n=0$.

 So, $s+j-p=0$.
Then
\[
e^{i\alpha
s}\overset{h}{\underset{j=0}{\sum}}{\overset{m-1}{\underset{p=0}{\sum}}}e^{i\alpha (j-p)}\frac{1}{j!p!}{\left(\frac{\partial}{\partial{z}}\right)}^{j}{\left(\frac{\partial}{\partial{\bar{z}}}\right)}^pf(ze^{i\beta})\overset{n}{\underset{\nu=1}{\sum}}R^{s+j+p}e^{i(s+j-p)\frac{2\pi\nu}{n}}=
\]
\[
=\overset{q}{\underset{p=s}{\sum}}\frac{nR^{2p}}{(p-s)!p!}{\left(\frac{\partial}{\partial{z}}\right)}^{p-s}{\left(\frac{\partial}{\partial{\bar{z}}}\right)}^pf(z
e^{i\beta}).
\]

Then it is obvious that equality (\ref{eq3}) holds.
\end{proof}
Let us  introduce the corresponding Fourier series for a function
$f\in{C^q}(B_{\mathcal{R}})$
\begin{equation}\label{eq4}
f(z)=\overset{\infty}{\underset{k=-\infty}{\sum}}f_{k}(\rho)e^{i\varphi
k},
\end{equation}
where
$f_{k}(\rho)=\frac{1}{2\pi}\overset{\pi}{\underset{-\pi}{\int}}f(\rho
e^{i\varphi})e^{-i\varphi k}d\varphi$.
\begin{lemma}\label{l2}
Assume that $f\in{C^q}(B_{\mathcal{R}})$
 and that for this function equality (\ref{eq3}) holds. Then  this
 equality holds for every term of the Fourier series of this function.
 The converse is also true.
\end{lemma}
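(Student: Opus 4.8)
The plan is to exploit the rotational covariance of equation (\ref{eq3}) together with the fact that the Fourier decomposition (\ref{eq4}) is precisely the decomposition of $f$ into eigenfunctions of rotation. Writing $g_k(z)=f_k(\rho)e^{ik\varphi}$ for the $k$-th term (with $z=\rho e^{i\varphi}$), the basic observation is that each $g_k$ satisfies $g_k(e^{i\beta}w)=e^{ik\beta}g_k(w)$. First I would record that the disk $B_{\mathcal{R}}$ is invariant under rotation about the origin: since $ze^{i\beta}+\zeta_\nu e^{i(\alpha+\beta)}=e^{i\beta}(z+\zeta_\nu e^{i\alpha})$, the admissibility condition in (\ref{eq3}) reduces to $|z+\zeta_\nu e^{i\alpha}|<\mathcal{R}$ and is therefore independent of $\beta$. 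Consequently, for each fixed admissible pair $(z,\alpha)$ the parameter $\beta$ ranges freely over $[0,2\pi]$, which is what makes a Fourier analysis in $\beta$ possible.

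Next I would compute how the two sides of (\ref{eq3}) respond to $\beta$ when $f$ is replaced by a single term $g_k$. On the left-hand side, using $\zeta_\nu e^{i(\alpha+\beta)}+ze^{i\beta}=e^{i\beta}(\zeta_\nu e^{i\alpha}+z)$ and the eigenfunction property, the whole sum acquires the scalar factor $e^{ik\beta}$. On the right-hand side, each operator $(\partial/\partial z)^{p-s}(\partial/\partial\bar z)^{p}$ raises the rotation weight by $s$ (since $\partial/\partial z$ lowers the weight by one and $\partial/\partial\bar z$ raises it by one), so $(\partial/\partial z)^{p-s}(\partial/\partial\bar z)^{p}g_k$ is again a rotation eigenfunction, now of weight $k+s$, and its evaluation at $ze^{i\beta}$ produces the factor $e^{i(k+s)\beta}$. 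Substituting the full series $f=\sum_k g_k$ therefore turns the left-hand side of (\ref{eq3}) into $\sum_k e^{ik\beta}A_k(\alpha,z)$ and the right-hand side into $\sum_k e^{i(k+s)\beta}B_k(z)$, where $A_k$ and $B_k$ denote the corresponding contributions evaluated at $\beta=0$.

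Having reduced both sides to Fourier series in $\beta$, I would finish by matching coefficients. Multiplying (\ref{eq3}) by $e^{-im\beta}$ and integrating over $\beta\in[0,2\pi]$ projects out one equation for each $m$, namely $A_m=B_{m-s}$; this is precisely the assertion that (\ref{eq3}) holds separately for the corresponding term of the Fourier series. The converse direction is then immediate: if the per-mode equations hold for every $m$, summing them over $m$ reconstructs (\ref{eq3}) for $f$ by linearity. Both implications thus follow from uniqueness of the Fourier expansion in $\beta$.

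The main obstacle I expect is justifying the term-by-term manipulations rather than the algebra of weights. Interchanging the infinite summation over $k$ with the finite sum over the vertices $\zeta_\nu$ is harmless, but interchanging it with the differential operators $(\partial/\partial z)^{p-s}(\partial/\partial\bar z)^{p}$ on the right-hand side is delicate: one must show that the Fourier series of $f$ may be differentiated term by term up to the orders occurring in (\ref{eq3}) and that the resulting series converge uniformly on compact subsets of $B_{\mathcal{R}}$. This is exactly where the hypothesis $f\in C^{q}(B_{\mathcal{R}})$ is used, and careful bookkeeping of the weight shift by $s$ is needed to pair the correct modes on the two sides.
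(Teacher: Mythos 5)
Your overall strategy---expanding both sides of (\ref{eq3}) in a Fourier series in the rotation parameter $\beta$ and matching coefficients---is the same as the paper's, but your weight bookkeeping on the right-hand side introduces a shift that your final step does not survive, and this is a genuine gap. You give the $k$-th mode of the right-hand side the factor $e^{i(k+s)\beta}$, deduce the shifted equations $A_m=B_{m-s}$, and then assert that these are ``precisely the assertion that (\ref{eq3}) holds separately for the corresponding term.'' That identification is false. On your reading of (\ref{eq3}), the equation for a single term $g_k$ says $e^{ik\beta}A_k(\alpha,z)=e^{i(k+s)\beta}B_k(z)$ for all $\beta$, i.e. $A_k=e^{is\beta}B_k$ for all $\beta$, which for $s\neq0$ forces $A_k=B_k=0$. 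So on your reading the per-mode statement is not $A_m=B_{m-s}$: the equations you derived pair the $m$-th Fourier term of $f$ on the left with the \emph{different} term of index $m-s$ on the right, and neither implication of the lemma follows from them.

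The shift is a signal that the right-hand side of (\ref{eq3}) has been misread. The derivatives there are to be applied to the rotated function $z\mapsto f(ze^{i\beta})$; that is, (\ref{eq3}) is exactly (\ref{eq1}) written for every rotation of $f$, and by the chain rule this differs from ``derivative of $f$ evaluated at $ze^{i\beta}$'' by precisely your factor $e^{-i\beta s}$. That this is the intended reading is forced by Lemma~\ref{l1}: take $f(z)=\bar z$, $n=3$, $s=1$, $h=1$, $m=2$, so that (\ref{eq1}) reduces to $\sum_{\nu}\zeta_\nu e^{i\alpha}f(\zeta_\nu e^{i\alpha}+z)=3R^2\,\partial f/\partial\bar z$; then the left side of (\ref{eq3}) equals $3R^2e^{-i\beta}$ (using $\sum_\nu \zeta_\nu =0$), which agrees with $3R^2\frac{\partial}{\partial\bar z}\bigl(\overline{ze^{i\beta}}\bigr)=3R^2e^{-i\beta}$ but not with $3R^2\bigl(\frac{\partial f}{\partial\bar z}\bigr)(ze^{i\beta})=3R^2$. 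With the correct reading the operator acts termwise on $\sum_k e^{ik\beta}g_k(z)$, so the $k$-th mode of the right-hand side also carries the factor $e^{ik\beta}$: there is no shift, projection in $\beta$ yields $A_k=B_k$ for every $k$ (which \emph{is} the per-mode statement of (\ref{eq3}), since rotating a single mode only multiplies both sides by $e^{ik\beta}$), and the converse follows by summing over $k$---this is the paper's proof. Your concluding concern about justifying termwise differentiation of the Fourier series is reasonable (the paper passes over it silently), but it is not the substantive defect; the pairing error is.
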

\begin{proof}
$\textbf{Necessity}$.

 We multiply the left and the right sides of
(\ref{eq3}) by $e^{-i\beta k}$ and integrate over $\beta$ from
$-\pi$ to $\pi$. We have
\[
\overset{\pi}{\underset{-\pi}{\int}}e^{-i\beta k}\overset{n}{\underset{\nu=1}{\sum}}{(\zeta_\nu e^{i\alpha})}^s\overset{\infty}{\underset{k=-\infty}{\sum}}f_{k}(\rho')e^{i(\varphi-\frac{2\pi\nu}{n}-\alpha)
k}e^{i\beta k}d\beta=
\]
\[
=\overset{\pi}{\underset{-\pi}{\int}}e^{-i\beta k}\overset{q}{\underset{p=s}{\sum}}\frac{nR^{2p}}{(p-s)!p!}{\left(\frac{\partial}{\partial{z}}\right)}^{p-s}{\left(\frac{\partial}{\partial{\bar{z}}}\right)}^p\overset{\infty}{\underset{k=-\infty}{\sum}}f_{k}(\rho)e^{i\varphi k}e^{i\beta k}d\beta,
\]
where $\rho'=\sqrt{R^2+\rho^2+2R\rho
\cos(\varphi-\frac{2\pi\nu}{n}-\alpha)}$.

Next,
\[
\overset{n}{\underset{\nu=1}{\sum}}{(\zeta_\nu e^{i\alpha})}^sf_{k}(\rho')e^{i(\varphi-\frac{2\pi\nu}{n}-\alpha)
k}=\overset{q}{\underset{p=s}{\sum}}\frac{nR^{2p}}{(p-s)!p!}{\left(\frac{\partial}{\partial{z}}\right)}^{p-s}{\left(\frac{\partial}{\partial{\bar{z}}}\right)}^pf_{k}(\rho)e^{i\varphi
k},
\]
which concludes the proof.

$\textbf{Sufficiency}$.

Let
\[
\lambda(\alpha)=\overset{n}{\underset{\nu=1}{\sum}}{(\zeta_\nu e^{i\alpha})}^sf(\rho'\cos(\varphi-\frac{2\pi\nu}{n}-\alpha+\beta),\rho'\sin(\varphi-\frac{2\pi\nu}{n}-\alpha+\beta))-
\]
\[
-\overset{q}{\underset{p=s}{\sum}}\frac{nR^{2p}}{(p-s)!p!}{\left(\frac{\partial}{\partial{z}}\right)}^{p-s}{\left(\frac{\partial}{\partial{\bar{z}}}\right)}^pf(\rho
\cos(\varphi+\beta),\rho \sin(\varphi+\beta)).
\]
Then we have the next equality
\[
\overset{\pi}{\underset{-\pi}{\int}}\lambda(\rho)e^{-i\beta k}d\rho=\overset{n}{\underset{\nu=1}{\sum}}\left(\overset{\pi}{\underset{-\pi}{\int}}{(\zeta_\nu e^{i\alpha})}^sf(\rho'\cos\beta,\rho'\sin\beta)e^{-i\beta k}d\beta\right)e^{i(\varphi-\frac{2\pi\nu}{n}-\alpha)
k}-
\]
\[
-\overset{q}{\underset{p=s}{\sum}}\frac{nR^{2p}}{(p-s)!p!}{\left(\frac{\partial}{\partial{z}}\right)}^{p-s}{\left(\frac{\partial}{\partial{\bar{z}}}\right)}^p\overset{\pi}{\underset{-\pi}{\int}}f(\rho
\cos\beta,\rho \sin\beta)e^{-i\beta k}d\beta
e^{i k\varphi}=0.
\]
Thus, equality $\lambda(\alpha)=0$ completes the proof of Lemma.
\end{proof}

\begin{lemma}\label{l3}
Let $f(z)=cN_0(\lambda|z|)$, where $N_0(\lambda|z|)$ is the Neumann
function, $\lambda\neq0$, $c$ is a constant, and  $f(z)$ satisfies
(\ref{eq1}) in $B_{\mathcal{R}}$. Then $c=0$.
\end{lemma}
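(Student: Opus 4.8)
The plan is to exploit the logarithmic singularity of the Neumann function at the origin and to show that it is incompatible with equation (\ref{eq1}) unless $c=0$. Throughout write $\rho=|z|$ and use that, away from the origin, $N_0(\lambda\rho)$ solves the Helmholtz equation $\Delta f+\lambda^2 f=0$, which in complex form reads $\partial_z\partial_{\bar z}f=-\tfrac{\lambda^2}{4}f$. The idea is that the left-hand side of (\ref{eq1}) stays regular at $z=0$, whereas the right-hand side inherits the singularity of $f$, so the two can agree only when $c=0$.

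First I would simplify the right-hand side. Since $\partial_{\bar z}$ commutes with $\Delta=4\partial_z\partial_{\bar z}$, the function $\partial_{\bar z}^s f$ is again a $\lambda^2$-eigenfunction, so for each $p$ with $s\le p\le q$,
\[
\left(\frac{\partial}{\partial z}\right)^{p-s}\left(\frac{\partial}{\partial\bar z}\right)^p f=\left(\partial_z\partial_{\bar z}\right)^{p-s}\partial_{\bar z}^s f=\left(-\frac{\lambda^2}{4}\right)^{p-s}\partial_{\bar z}^s f .
\]
Summing over $p$, the right-hand side of (\ref{eq1}) collapses to $C\,\partial_{\bar z}^s f(z)$ with the explicit constant $C=nR^{2s}\sum_{j=0}^{q-s}\frac{(-\lambda^2R^2/4)^j}{j!\,(j+s)!}$, a truncated Bessel-type polynomial in $\lambda^2$.

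Next I would examine the behaviour near $z=0$. For $z$ in a small neighbourhood of the origin each point $\zeta_\nu e^{i\alpha}+z$ stays close to the circle $|w|=R$ and inside $B_{\mathcal R}$ (since $\mathcal R>\tfrac12\mathcal R_\ast>R$), so the left-hand side of (\ref{eq1}) is continuous, hence bounded, as $z\to0$. On the other hand, from the standard expansion $N_0(\lambda\rho)=\tfrac{2}{\pi}J_0(\lambda\rho)\ln\rho+(\text{function analytic in }\rho^2)$ one reads off that $\partial_{\bar z}^s f$ is \emph{unbounded} at the origin: it behaves like $\bar z^{-s}$ when $s\ge1$ and like $\ln|z|$ when $s=0$, with a coefficient proportional to $c$. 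Since the two sides of the identity $\text{LHS}=C\,\partial_{\bar z}^s f$ must agree on a punctured neighbourhood of $0$, boundedness of the left-hand side forces $Cc=0$.

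If $C\ne0$ this yields $c=0$ at once. The remaining case $C=0$, possible for isolated values of $\lambda$, is the crux: here (\ref{eq1}) degenerates into $\sum_{\nu=1}^n(\zeta_\nu e^{i\alpha})^s f(\zeta_\nu e^{i\alpha}+z)\equiv0$ for all small $z$ and all $\alpha$. To finish I would Taylor-expand this function, which is smooth near $0$ and identically zero. Writing $\partial_z^a\partial_{\bar z}^b f=e^{i(b-a)\theta}G_{a,b}(\rho)$ and carrying out the sum $\sum_\nu e^{i(s+b-a)2\pi\nu/n}$, one finds that every Taylor coefficient vanishes automatically except those with $a-b\equiv s\pmod n$, for which $G_{a,b}(R)=0$. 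Taking $a=s+kn$, $b=0$, the eigenfunction identity reduces $G_{a,b}(R)$ to a nonzero multiple of $N_{s+kn}(\lambda R)$; since $N_\nu(\lambda R)\ne0$ for all large orders $\nu$ by the order-asymptotics of the Neumann function, this forces $c=0$. I expect this reduction of the identically vanishing left-hand side to conditions on Neumann functions at $|z|=R$, together with verifying that at least one of them is genuinely nonzero, to be the main technical obstacle of the argument.
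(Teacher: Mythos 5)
Your proposal is correct, and it reaches the conclusion by a route that is related in spirit to the paper's but genuinely different---and more complete---in execution. Both arguments turn on the same mismatch: the shifted sum on the left of (\ref{eq1}) is real-analytic near $z=0$, while the right-hand side carries the singularity of $N_0$ at the origin. The paper, however, disposes of the right-hand side by asserting (citing [3, Part 1, Prop.~7.1]) that the operator $D=\sum_{p=s}^{q}\frac{nR^{2p}}{(p-s)!p!}\partial_z^{p-s}\partial_{\bar z}^{p}$ factors as $(\Delta+\lambda^2)P(\partial)$, hence $DN_0(\lambda|z|)=0$ off the origin, and then concludes $c=0$ from the resulting identity $\sum_{\nu=1}^{n}(\zeta_\nu e^{i\alpha})^{s}cN_0(\lambda|\zeta_\nu e^{i\alpha}+z|)\equiv0$ without detailing the final step. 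Your reduction modulo the Helmholtz operator shows that such a factorization can exist only when your constant $C=nR^{2s}\sum_{j=0}^{q-s}\frac{(-\lambda^2R^2/4)^j}{j!\,(j+s)!}$ vanishes, since $D\equiv C\partial_{\bar z}^{s}$ modulo $\Delta+\lambda^{2}$ and $\partial_{\bar z}^{s}$ is not divisible by $\Delta+\lambda^2$; so the paper's factorization step cannot hold for generic $\lambda$, and your two-case split is exactly what is needed to make the argument watertight. For $C\neq0$ your singularity comparison (bounded left side against $C\,c\,\partial_{\bar z}^sN_0\sim\mathrm{const}\cdot c\,\bar z^{-s}$, or $c\log|z|$ when $s=0$) gives $c=0$ at once; for $C=0$ your Taylor-coefficient analysis---reducing the vanishing of the shifted sum to $N_{s+kn}(\lambda R)=0$ for all $k\geq0$ and contradicting the large-order behaviour of Neumann functions---rigorously supplies the implication that the paper compresses into ``hence $c=0$''. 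What your approach buys is a self-contained proof independent of the cited factorization; what the paper's buys, granting its citation, is brevity. Two routine points to tighten: record the differentiation formula $\partial_w^{a}\bigl(Z_0(\lambda|w|)\bigr)=(-\lambda/2)^{a}Z_{a}(\lambda\rho)e^{-ia\theta}$ behind your evaluation of $G_{a,0}(R)$, and note that in the application (Lemma~\ref{l4}) $\lambda$ may be complex, so the large-order asymptotics $N_m(w)\sim-\frac{(m-1)!}{\pi}(2/w)^{m}$ should be invoked for fixed complex $w\neq0$, where it remains valid.
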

\begin{proof}
The function $N_0(\lambda|z|)$ is real-analytic, so from [3, Part 1]
we have
\[
N_0(\lambda|z|)=\frac{2}{\pi}J_0(\lambda|z|)\left(\log\frac{\lambda|z|}{2}+\gamma\right)-\frac{2}{\pi}\overset{\infty}{\underset{m=0}{\sum}}\frac{(-1)^m{\left(\frac{\lambda|z|}{2}\right)}^{2m}}{(m!)^2}\overset{m}{\underset{k=1}{\sum}}\frac{1}{k} ,
\]
where $J_0(\lambda|z|)$ is the Bessel function,
$\gamma=\underset{N\rightarrow
+\infty}{\lim}\left(\overset{N}{\underset{k=1}{\sum}}\frac{1}{k}-\log
N\right)$. Using the Taylor expansion of the Bessel functions, we
obtain
\[
N_0(\lambda|z|)=\frac{2}{\pi}\overset{\infty}{\underset{m=0}{\sum}}\frac{(-1)^m{(\frac{\lambda|z|}{2})}^{2m}}{(m!)^2}\left(\log\frac{\lambda|z|}{2}+\gamma-\overset{m}{\underset{k=1}{\sum}}\frac{1}{k}\right).
\]
 Let us substitute  $f(z)=cN_0(\lambda|z|)$ into equality
 (\ref{eq1}).

We have
\[
\overset{n}{\underset{\nu=1}{\sum}}{(\zeta_\nu e^{i\alpha})}^s
cN_0(\lambda|\zeta_\nu e^{i\alpha}+z|)=\overset{q}{\underset{p=s}{\sum}}c\frac{nR^{2p}}{(p-s)!p!}{\left(\frac{\partial}{\partial{z}}\right)}^{p-s}{\left(\frac{\partial}{\partial{\bar{z}}}\right)}^p
N_0(\lambda|z|),
\]
where the function  ${(\zeta_\nu e^{i\alpha})}^s
N_0(\lambda|\zeta_\nu e^{i\alpha}+z|)$ is real-analytic.

Denote the derivation of the Neumann function in the last equation
by $DN_0(\lambda|z|)$, which is a real-analytic function. According
to [3, Part 1, Prop.7.1] we have $D=(\Delta+\lambda^2)P(\partial)$,
where $P(\partial)$ is a differential operator in $\mathbb{R}^2$.
Then $DN_0(\lambda|z|)=0$ everywhere except  $z=0$.

On the other hand, using the obtained form for $N_0(\lambda|z|)$, we
have
\[
\overset{n}{\underset{\nu=1}{\sum}}{(\zeta_\nu e^{i\alpha})}^s
cN_0(\lambda|\zeta_\nu e^{i\alpha}+z|)\equiv0
\]
in a neighborhood of the point $z=0$.

 Hence, the constant $c$ is zero in the general form of the function $f(z)$.
\end{proof}
\begin{lemma}
Let $\gamma\in\mathbb{R}^1$, $z=x+iy$,
$\lambda\in\mathbb{C}\setminus{\{0\}}$, $c\in\mathbb{C}$ and
$f^{\ast}(z)=ce^{i(x\cos\gamma+y\sin\gamma)\lambda}$. Then
$f^{\ast}(z)=\overset{\infty}{\underset{k=-\infty}{\sum}}c_kJ_k(\lambda\rho)e^{ik\varphi}$,
where $c_k$ are constants.
\end{lemma}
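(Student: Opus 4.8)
The plan is to pass to polar coordinates and recognize the resulting expansion as the classical Jacobi--Anger expansion. Writing $z=\rho e^{i\varphi}$, so that $x=\rho\cos\varphi$ and $y=\rho\sin\varphi$, the exponent simplifies by the cosine addition formula to
\[
x\cos\gamma+y\sin\gamma=\rho\cos(\varphi-\gamma),
\]
whence $f^{\ast}(z)=ce^{i\lambda\rho\cos(\varphi-\gamma)}$. Thus it suffices to expand the one-variable function $\theta\mapsto e^{iw\cos\theta}$ (with $w=\lambda\rho$ and $\theta=\varphi-\gamma$) in a Fourier series in $\theta$ and to identify its coefficients with Bessel functions, which fits directly into the Fourier-series framework of (\ref{eq4}).

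First I would compute the $k$-th Fourier coefficient of $f^{\ast}(\rho e^{i\varphi})$ with respect to $\varphi$, namely
\[
c_k(\rho)=\frac{1}{2\pi}\overset{\pi}{\underset{-\pi}{\int}}ce^{i\lambda\rho\cos(\varphi-\gamma)}e^{-ik\varphi}\,d\varphi.
\]
Substituting $\psi=\varphi-\gamma$ and using that the integrand is $2\pi$-periodic in $\psi$ (so the interval of integration may be shifted back to $[-\pi,\pi]$) pulls out the factor $e^{-ik\gamma}$ and reduces the coefficient to
\[
c_k(\rho)=\frac{ce^{-ik\gamma}}{2\pi}\overset{\pi}{\underset{-\pi}{\int}}e^{i\lambda\rho\cos\psi}e^{-ik\psi}\,d\psi.
\]

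Next I would invoke the integral representation of the Bessel function, $\frac{1}{2\pi}\int_{-\pi}^{\pi}e^{iw\cos\psi}e^{-ik\psi}\,d\psi=i^{k}J_k(w)$, which is equivalent to the Jacobi--Anger expansion $e^{iw\cos\psi}=\sum_{k}i^{k}J_k(w)e^{ik\psi}$. Applying it with $w=\lambda\rho$ gives $c_k(\rho)=ci^{k}e^{-ik\gamma}J_k(\lambda\rho)$. Since the factor $ci^{k}e^{-ik\gamma}$ does not depend on $\rho$, setting $c_k:=ci^{k}e^{-ik\gamma}$ yields exactly
\[
f^{\ast}(z)=\overset{\infty}{\underset{k=-\infty}{\sum}}c_kJ_k(\lambda\rho)e^{ik\varphi},
\]
as claimed.

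The only point requiring care is that here $\lambda$ is allowed to be complex, so one must check that the interchange of summation and integration (equivalently, the legitimacy of the Jacobi--Anger expansion) is justified in this generality. This is not a genuine obstacle: each $J_k$ is an entire function of its argument, and for fixed $\rho$ the coefficients obey a bound of the type $|J_k(\lambda\rho)|\lesssim(|\lambda|\rho/2)^{|k|}/|k|!$, so the series converges absolutely and uniformly on compact subsets of the plane for every $\lambda\in\mathbb{C}\setminus\{0\}$. Hence the term-by-term manipulations are valid and the computation of the Fourier coefficients is rigorous.
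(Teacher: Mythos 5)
Your proof is correct, but it takes a genuinely different route from the paper. The paper never computes the Fourier coefficients explicitly: instead it verifies that $f^{\ast}$ satisfies the Helmholtz equation $\Delta f^{\ast}+\lambda^{2}f^{\ast}=0$, shows (by differentiating under the integral sign in the representation (\ref{eq6})) that each Fourier term $f_k(\rho)e^{ik\varphi}$ satisfies the same equation, reduces this to the Bessel ODE $\rho^{2}f_k''+\rho f_k'+(\lambda^{2}\rho^{2}-k^{2})f_k=0$ for the radial part, and finally uses continuity of $f_k$ at the origin to discard the singular (Neumann) solution, concluding $f_k(\rho)=c_kJ_k(\lambda\rho)$ without ever identifying $c_k$. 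You, by contrast, pass to polar coordinates, reduce everything to the classical Bessel integral representation $\frac{1}{2\pi}\int_{-\pi}^{\pi}e^{iw\cos\psi}e^{-ik\psi}\,d\psi=i^{k}J_k(w)$ (equivalently the Jacobi--Anger expansion), and obtain the coefficients in closed form, $c_k=c\,i^{k}e^{-ik\gamma}$. Your argument is shorter, fully explicit, and sidesteps the ODE machinery entirely; it also avoids the slightly delicate step in the paper where the second solution of the Bessel equation must be excluded by a boundedness argument. The price is that you must import the integral representation of $J_k$ and justify it for complex argument $w=\lambda\rho$, which you correctly handle by noting that both sides are entire in $w$ so the identity extends by analytic continuation, with the series converging absolutely thanks to the bound $|J_k(\lambda\rho)|\lesssim(|\lambda|\rho/2)^{|k|}/|k|!$. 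The paper's approach, while longer, derives the Bessel equation from first principles and fits the overall scheme of the paper, where solutions are repeatedly identified through differential equations (compare Lemma~\ref{l3} and Lemma~\ref{l4}); both proofs are sound.
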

\begin{proof}
 At first let us show that the initial function satisfies the following equation
\begin{equation}\label{eq5}
\Delta{f^{\ast}(z)}+\lambda^2f^{\ast}(z)=0.
\end{equation}
We have
\[
-c\lambda^2e^{i(x\cos\gamma+y\sin\gamma)\lambda}+\lambda^2ce^{i(x\cos\gamma+y\sin\gamma)\lambda}=0.
\]
 Now we check that each term  $f_k(\rho)e^{ik\varphi}$ of the expansion of $f^{\ast}(z)$  satisfies
equation (\ref{eq5}) too.

Let
\begin{equation}\label{eq6}
f_k(\rho)e^{ik\varphi}=\frac{1}{2\pi}\overset{\pi}{\underset{-\pi}{\int}}f^{\ast}(x\cos
t-y\sin t,x\sin t+y\cos t)e^{-ikt}dt.
\end{equation}
We denote $h(x,y,t)=f^{\ast}(x\cos t-y\sin t,x\sin t+y\cos t)$.

Then
\[
\frac{1}{2\pi}\overset{\pi}{\underset{-\pi}{\int}}\Delta{h(x,y,t)}e^{-ikt}dt+\lambda^2\frac{1}{2\pi}\overset{\pi}{\underset{-\pi}{\int}}h(x,y,t)e^{-ikt}dt=0.
\]
Now it is clear that
\[
\Delta{(f_k(\rho)e^{ik\varphi})}+\lambda^2f_k(\rho)e^{ik\varphi}=0
\]
It is known that
\[\frac{\partial}{\partial
x}\left(f_k(\rho)e^{ik\varphi}\right)=\frac{1}{2}\left(f^{'}_k+\frac{k}{\rho}f_k\right)e^{i(k-1)\varphi}+\frac{1}{2}\left(f^{'}_k-\frac{k}{\rho}f_k\right)e^{i(k+1)\varphi}\]
and
\[\frac{\partial}{\partial
y}\left(f_k(\rho)e^{ik\varphi}\right)=\frac{i}{2}\left(f^{'}_k+\frac{k}{\rho}f_k\right)e^{i(k-1)\varphi}-\frac{i}{2}\left(f^{'}_k-\frac{k}{\rho}f_k\right)e^{i(k+1)\varphi}.\]

Now we can find the following
\[
\frac{\partial^2}{\partial
x^2}\left(f_k(\rho)e^{ik\varphi}\right)=\frac{1}{4}{\left(\left(f^{'}_k+\frac{k}{\rho}f_k\right)'+\frac{k-1}{\rho}\left(f^{'}_k+\frac{k}{\rho}f_k\right)\right)}e^{i(k-2)\varphi}+\frac{1}{4}{\left(2f''_k+2\frac{f'_k}{\rho}-2\frac{k^2}{\rho^2}f_k\right)}e^{ik\varphi}+
\]
\[
+\frac{1}{4}{\left(\left(f^{'}_k-\frac{k}{\rho}f_k\right)'-\frac{k+1}{\rho}\left(f^{'}_k-\frac{k}{\rho}f_k\right)\right)}e^{i(k+2)\varphi}.
\]
Similarly,
\[
\frac{\partial^2}{\partial
y^2}\left(f_k(\rho)e^{ik\varphi}\right)=-\frac{1}{4}{\left(\left(f^{'}_k+\frac{k}{\rho}f_k\right)'+\frac{k-1}{\rho}\left(f^{'}_k+\frac{k}{\rho}f_k\right)\right)}e^{i(k-2)\varphi}+\frac{1}{4}{\left(2f''_k+2\frac{f'_k}{\rho}-2\frac{k^2}{\rho^2}f_k\right)}e^{ik\varphi}-
\]
\[
-\frac{1}{4}{\left(\left(f^{'}_k-\frac{k}{\rho}f_k\right)'-\frac{k+1}{\rho}\left(f^{'}_k-\frac{k}{\rho}f_k\right)\right)}e^{i(k+2)\varphi}.
\]
Considering the above, we have the Bessel equation for the function
$f_k(\rho)$
\begin{equation}\label{eq7}
\rho^2f''_k+\rho f'_k+(\lambda^2\rho^2-k^2)f_k=0.
\end{equation}
Since $f\in{C^\infty(\mathbb{C})}$, it follows from (\ref{eq6}) that
the function $f_k(\rho)$ is continuous on $[0,+\infty]$. Hence from
equality (\ref{eq7}) we obtain $f_k(\rho)=c_kJ_k(\lambda\rho)$.
\end{proof}
\begin{lemma}
Let $f(z)=cJ_0(\lambda|z|)$ ($\lambda\neq0$, $c\in\mathbb{C}$)
satisfy (\ref{eq1}) in $B_{\mathcal{R}}$. Then $c=0$.
\end{lemma}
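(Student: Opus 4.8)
The plan is to use the fact that $J_0(\lambda|z|)$ is entire, hence real-analytic on all of $\mathbb{C}$, so the singularity argument that killed the Neumann function in Lemma~\ref{l3} is unavailable here. Instead I would exploit an asymmetry between the two sides of (\ref{eq1}): the right-hand side is a differential operator applied to $f$ and therefore does not depend on the rotation parameter $\alpha$, whereas the left-hand side does. Comparing the $\alpha$-dependence will force a family of Bessel values to vanish, and that will be impossible unless $c=0$.

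First I would observe that $\mathcal{R}>R$ (this follows from $\mathcal{R}>\tfrac12\mathcal{R}_\ast$), so for every $z$ with $|z|<\mathcal{R}-R$ all the points $\zeta_\nu e^{i\alpha}+z$ lie in $B_{\mathcal{R}}$ simultaneously for every $\alpha\in[0,2\pi]$. Thus, for such $z$, equation (\ref{eq1}) with $f=cJ_0(\lambda|z|)$ holds for all $\alpha$ at once. Writing $z=\rho e^{i\varphi}$ and applying Graf's addition theorem for Bessel functions,
\[
J_0(\lambda|\zeta_\nu e^{i\alpha}+z|)=\sum_{k=-\infty}^{\infty}(-1)^kJ_k(\lambda R)J_k(\lambda\rho)e^{ik(\varphi-\alpha-\frac{2\pi\nu}{n})},
\]
where the series converges absolutely and uniformly on compacta since $J_k(\lambda R)J_k(\lambda\rho)$ decays super-exponentially in $|k|$; only the nonvanishing of the coefficient of $J_k(\lambda R)J_k(\lambda\rho)$ will matter, not its precise value.

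The key step is the sum over vertices. Using $\zeta_\nu e^{i\alpha}=Re^{i(\alpha+2\pi\nu/n)}$, the left side of (\ref{eq1}) produces the factor $\sum_{\nu=1}^{n}e^{i(s-k)\frac{2\pi\nu}{n}}$, which equals $n$ when $k\equiv s\pmod n$ and vanishes otherwise. Setting $k=s-jn$, the left-hand side becomes
\[
cR^{s}n\sum_{j=-\infty}^{\infty}(-1)^{s-jn}J_{s-jn}(\lambda R)J_{s-jn}(\lambda\rho)e^{i(s-jn)\varphi}e^{ijn\alpha}.
\]
Since the right-hand side of (\ref{eq1}) is independent of $\alpha$, comparing the Fourier coefficients in $\alpha$ (legitimate because the displayed series is a uniformly convergent trigonometric series in $\alpha$) shows that every term with $j\neq0$ must vanish, so $cJ_{s-jn}(\lambda R)J_{s-jn}(\lambda\rho)\equiv0$ for $\rho\in[0,\mathcal{R}-R)$ and every $j\neq0$. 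As $J_{s-jn}(\lambda\rho)$ is real-analytic and not identically zero, this yields $cJ_{s-jn}(\lambda R)=0$ for all $j\neq0$.

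Finally, letting $j\to-\infty$ the order $s-jn\to+\infty$, and for fixed $\lambda R\neq0$ one has $J_m(\lambda R)\sim(\lambda R/2)^m/m!\neq0$ for all sufficiently large $m$; hence $c=0$, which is the assertion. The main obstacle I anticipate is the justification of the manipulations with the addition-theorem series, and in particular the comparison of $\alpha$-Fourier coefficients: this rests entirely on having secured a common region $|z|<\mathcal{R}-R$ on which (\ref{eq1}) holds for all $\alpha$, which is exactly the point at which the hypothesis $\mathcal{R}>\tfrac12\mathcal{R}_\ast$ is used.
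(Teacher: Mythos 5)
Your proof is correct, but it takes a genuinely different route from the paper's. The paper deduces this lemma from Lemma 4: it replaces $cJ_0(\lambda|z|)$ by the plane wave $ce^{i\lambda(x\cos\gamma+y\sin\gamma)}$ and substitutes that into (\ref{eq1}); since the plane wave is an eigenfunction both of translation and of the operators ${\left(\frac{\partial}{\partial z}\right)}^{p-s}{\left(\frac{\partial}{\partial\bar z}\right)}^{p}$, both sides become the same exponential times explicit factors --- on the left $\sum_{\nu}{(\zeta_\nu e^{i\alpha})}^{s}e^{i\lambda(x_\nu\cos\gamma+y_\nu\sin\gamma)}$, on the right a constant multiple of $e^{i\gamma s}$ --- and the paper concludes, rather tersely (``it is obvious''), that these cannot agree unless $c=0$. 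You instead work directly with $J_0$: Graf's addition theorem turns the vertex sum into a lacunary trigonometric series in $\alpha$ with frequencies $jn$, the $\alpha$-independence of the right-hand side of (\ref{eq1}) annihilates every coefficient with $j\neq 0$, and the large-order asymptotics $J_m(\lambda R)\sim{(\lambda R/2)}^{m}/m!$ rules out the alternative that all $J_{s-jn}(\lambda R)$ vanish. What your version buys is self-containedness and rigor: it avoids the transfer from $J_0$ to the plane wave, which the paper states loosely (as written, ``$f(z)=ce^{i(x\cos\gamma+y\sin\gamma)\lambda}$'' is not literally true --- $J_0$ is an average of plane waves, not equal to one), it justifies the convergence and the Fourier-coefficient comparison, and it pinpoints exactly where the hypothesis $\mathcal{R}>\tfrac12\mathcal{R}_\ast$ enters (securing the disk $|z|<\mathcal{R}-R$ on which (\ref{eq1}) holds for all $\alpha$ simultaneously). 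What the paper's route buys is brevity, since the plane-wave substitution is a one-line computation; but note that making its final ``obvious'' step rigorous would require essentially your lacunary-series argument (via the Jacobi--Anger expansion of the plane wave), so the two proofs are ultimately cousins, with yours being the one that is actually complete.
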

\begin{proof}
From Lemma 4, $f(z)=ce^{i(x\cos\gamma+y\sin\gamma)\lambda}$.

 Substitute $f(z)$ in equation (\ref{eq1}).
\[
\overset{n}{\underset{\nu=1}{\sum}}{(\zeta_\nu e^{i\alpha})}^sce^{i(x_\nu
\cos\gamma+y_\nu
\sin\gamma)\lambda}e^{i(x\cos\gamma+y\sin\gamma)\lambda}=\overset{q}{\underset{p=s}{\sum}}\frac{nR^{2p}}{(p-s)!p!}c\frac{i^{2p-s}}{2^{2p-s}}\lambda^{2p-s}e^{i\gamma
s}e^{i(x\cos\gamma+y\sin\gamma)\lambda}.
\]
It is obvious that $c=0$.
\end{proof}

Denote by $C_\natural^q$ a class of $q$ times differentiated radial
functions.
\begin{lemma}\label{l4}
Let  $f\in{C_\natural^q}(B_{\mathcal{R}})$ and assume that
(\ref{eq1}) holds in $B_{\mathcal{R}}$. Then
$f(z)=\overset{q}{\underset{k=0}{\sum}}c_k|z|^{2k}$, where $c_k$ are
some constants.
\end{lemma}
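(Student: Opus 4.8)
The plan is to show that a radial solution of (\ref{eq1}) can contain no oscillatory part, so that it must reduce to a polynomial in $|z|^2$; the preceding lemmas on $J_0(\lambda|z|)$ and $N_0(\lambda|z|)$ are exactly the device that excludes the oscillatory parts. The first observation I would record is that the solution space of (\ref{eq1}) is invariant under the Laplacian. Indeed, applying $\Delta_z=4\frac{\partial}{\partial z}\frac{\partial}{\partial\bar z}$ to both sides of (\ref{eq1}) and using that $\Delta$ commutes with the translations $z\mapsto \zeta_\nu e^{i\alpha}+z$ on the left and with the constant-coefficient operators $(\partial/\partial z)^{p-s}(\partial/\partial\bar z)^p$ on the right, one sees immediately that $\Delta f$ again satisfies (\ref{eq1}), and $\Delta f$ is again radial. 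Thus the radial solutions form a module over $\mathbb{C}[\Delta]$, which localizes the problem at the radial eigenfunctions of $\Delta$, that is, at the radial solutions of the Helmholtz equation $\Delta u+\lambda^2 u=0$; for each $\lambda$ these are exhausted by $J_0(\lambda|z|)$ and $N_0(\lambda|z|)$.

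The central step is then a spectral decomposition. Under the hypothesis $\mathcal{R}>\frac12\mathcal{R}_\ast(n,R,r)$ the disk is large enough for the structure theory of convolution equations (cf.\ \cite{V3}) to apply, so that $f$ is recovered, in the appropriate topology, from the elementary solutions $J_0(\lambda|z|)$ and $N_0(\lambda|z|)$ attached to the spectrum of the equation, together with the solutions arising from $\lambda=0$. Each such elementary solution $cJ_0(\lambda|z|)$ or $cN_0(\lambda|z|)$ with $\lambda\neq0$ must itself satisfy (\ref{eq1}); but by Lemma~\ref{l3} together with the lemma on $J_0(\lambda|z|)$ this forces $c=0$. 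Hence the spectrum of $f$ collapses to the single point $\lambda=0$.

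It remains to treat the contribution of $\lambda=0$. Expanding $J_0(\lambda|z|)$ at $\lambda=0$ produces precisely the radial monomials $|z|^{2k}$, so $f$ is a polynomial in $|z|^2$ that is smooth at the origin. To pin down the degree I would substitute $f=\sum_k c_k|z|^{2k}$ into (\ref{eq1}): with $|z|^{2k}=(z\bar z)^k$ the right-hand side reaches only the operators with $p\le q$, whereas the left-hand side, via the circular-mean expansion $\frac{1}{2\pi}\int_0^{2\pi}g(z+Re^{i\theta})\,d\theta=\sum_{p\ge0}\frac{R^{2p}}{4^p(p!)^2}\Delta^p g(z)$, produces a tail $\sum_{p>q}$ that has to cancel. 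Comparing the top-degree terms then yields $c_k=0$ for $k>q$ (for $s\neq0$ the same comparison is carried out after stripping the common factor $z^s$ and $e^{is\alpha}$). This gives $f(z)=\sum_{k=0}^{q}c_k|z|^{2k}$; equivalently, the degree bound is the order $q+1$ of the zero of the symbol at $\lambda=0$.

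The main obstacle is the middle step: justifying the spectral decomposition that lets one pass from \emph{``no elementary solution $J_0(\lambda|z|),N_0(\lambda|z|)$ with $\lambda\neq0$''} to \emph{``$f$ has no component off $\lambda=0$''}. This is exactly where the quantitative radius condition $\mathcal{R}>\frac12\mathcal{R}_\ast$ is indispensable, since it guarantees that $B_{\mathcal{R}}$ is large enough for spectral analysis and synthesis to hold; once that is in place, the $\Delta$-invariance and the degree count are routine.
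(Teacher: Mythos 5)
Your strategy coincides with the paper's at both endpoints: exclude the elementary solutions $cJ_0(\lambda|z|)$ and $cN_0(\lambda|z|)$, $\lambda\neq0$, by Lemma~\ref{l3} and Lemma~5, then identify the $\lambda=0$ contribution as a polynomial in $|z|^2$ and bound its degree by substituting into (\ref{eq1}) --- that closing degree count is exactly the paper's final step. Your opening observation, that the solution set of (\ref{eq1}) is stable under $\Delta$ because $\Delta$ commutes with the translations $z\mapsto\zeta_\nu e^{i\alpha}+z$ and with $(\partial/\partial z)^{p-s}(\partial/\partial\bar z)^p$, is also correct, and is in fact implicitly needed by the paper. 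But the middle step --- the passage from \emph{no single elementary solution with $\lambda\neq0$ satisfies (\ref{eq1})} to \emph{$f$ contains no component off $\lambda=0$} --- is a genuine gap, and it is the entire content of the lemma. What you invoke there is spectral \emph{synthesis}: that every solution of the convolution-type equation (\ref{eq1}) in $B_{\mathcal{R}}$ is recovered, in some topology, from the elementary solutions attached to its spectrum. You state no precise theorem, no topology in which the decomposition converges, and no argument that it applies here; an appeal to the general structure theory of \cite{V3} without a specific result is not a proof, and synthesis statements of this kind are substantially stronger and harder than the analysis statement you actually need. As written, your argument establishes nothing beyond the two lemmas you quote.

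The paper avoids synthesis altogether. It cites a concrete, finite result [3, Part 4, Th.3.2]: if $f\in C^q(B_{\mathcal{R}})$ satisfies an equation of the form (\ref{eq1}), then there exists a \emph{polynomial} $P$ with $P(\triangle)f_0=0$ in $B_{\mathcal{R}}$. This single citation replaces your decomposition, because it reduces the problem to finitely many Helmholtz equations: factoring $P$ and setting $F=c_n\prod_{j\neq i}(\triangle-\lambda_j)f_0$ gives $(\triangle-\lambda_i)F=0$, hence $F=c_1J_0(\sqrt{\lambda_i}|z|)+c_2N_0(\sqrt{\lambda_i}|z|)$; since $F$ still satisfies (\ref{eq1}) (this is precisely your $\triangle$-invariance), Lemma~\ref{l3} and Lemma~5 force $c_1=c_2=0$, and stripping the nonzero roots one at a time leaves an equation of the form $\triangle^{N}f_0=0$, so $f_0$ is a polynomial in $|z|^2$ --- with no convergence or completeness questions ever arising. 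To repair your proof, replace the synthesis step by this annihilation theorem (or prove a synthesis theorem, which is a far larger undertaking); the $\triangle$-invariance and the degree count you wrote then go through essentially as in the paper.
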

\begin{proof}
From [3, Part 4, Th.3.2] we have the next statement.

Let $f\in{C^q(B_{\mathcal{R}})}$ and assume that there exists a
polynomial $Q$:
 $\mathbb{C}\rightarrow\mathbb{C}$ satisfying the equality
\[
\overset{n}{\underset{\nu=1}{\sum}}{(\zeta_\nu e^{i\alpha})}^sf(\zeta_\nu
e^{i\alpha}+z)=\overset{q}{\underset{p=s}{\sum}}\frac{nR^{2p}}{(p-s)!p!}Q(\partial)f(z).
\]

 Then there exists a polynomial $P$ such that
$P(\triangle)f_0=0$ in $B_{\mathcal{R}}$, where $\triangle$ is the
Laplacian.

Therefore,
$P(\triangle)f=\overset{n}{\underset{\nu=1}{\sum}}c_\nu\triangle^\nu
f_0=0$, where $c_\nu$ are constants.

Thus for any $i\in{{1,...,n}}$ we have the following
\[
 (\triangle-\lambda_i)F=0,
 \]
where $F=c_n \overset{n}{\underset{j\neq
i}{\prod}}(\triangle-\lambda_j)f_0$,  $\lambda_i\neq0$ are solutions
of $P(z)=0$.

 Let $\lambda_i\neq0$.

We find $F(z)$ as the solution of the differential Bessel equation
\[
F(z)=c_1J_0(\sqrt{\lambda_i}|z|)+c_2N_0(\sqrt{\lambda_i}|z|),
\]
where $J_0, N_0$ are the Bessel and Neumann functions respectively,
and $c_1, c_2$ are some constants.

From Lemma 3 and Lemma 5 we see that $c_1=c_2=0$.

Hence $\overset{n}{\underset{j\neq
i}{\prod}}(\triangle-\lambda_j)f_0=0$.

 Then
$(\triangle-\lambda_m)\overset{n}{\underset{j\neq
m}{\prod}}(\triangle-\lambda_j)f_0=0$, $m\in\mathbb{N}$.

Similarly we obtain, that $\triangle f_0=0$, $\lambda_j\neq0$.

If $\lambda_j=0$, then let
$f=\overset{N}{\underset{k>\min\{h,m-1\}}{\sum}}c_k|z|^{2k}$,
$N\in\mathbb{N}$.

Consider the case $k=\min\{h,m-1\}+1$.

Function with the selected index does not satisfy equation
(\ref{eq1}). Now take the Laplacian of $f$. As a result, we have
\[
\triangle\overset{N}{\underset{k>\min\{h,m-1\}}{\sum}}c_k|z|^{2k}=\overset{N}{\underset{k>\min\{h,m-1\}}{\sum}}\tilde{c_k}|z|^{2(k-1)},
\]
where $\tilde{c_k}$ are some constants.

Now substitute the function in equation (\ref{eq1}). The only case,
when $k=\min\{h,m-1\}+1$ fits for the initial equality. This
means that $f=\tilde{c}_{\min\{h,m-1\}}|z|^{2(k-1)}$, where
$\tilde{c}_{\min\{h,m-1\}}$ are constants.

Again, we take the Laplacian and  obtain the following function
\[
f=\overset{q}{\underset{k=0}{\sum}}c_k|z|^{2k},\ c_k = const,
\]
as contended.
\end{proof}
\begin{lemma}\label{l5}
Let  $f_{j}(\rho)e^{i j\varphi}$ satisfy (\ref{eq1}). Then  we have
the same equality
 for $f_{j+1}(\rho)e^{i (j+1)\varphi}$ and $f_{j-1}(\rho)e^{i (j-1)\varphi}$,
$j=0,1,2,...$ .
\end{lemma}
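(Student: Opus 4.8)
The plan is to treat the two Wirtinger derivatives $\partial/\partial z$ and $\partial/\partial\bar z$ as ladder operators that move between neighbouring Fourier modes while preserving the validity of (\ref{eq1}). Writing $z=\rho e^{i\varphi}$ and using the polar expressions
\[
\frac{\partial}{\partial z}=\frac12 e^{-i\varphi}\left(\frac{\partial}{\partial\rho}-\frac{i}{\rho}\frac{\partial}{\partial\varphi}\right),\qquad
\frac{\partial}{\partial\bar z}=\frac12 e^{i\varphi}\left(\frac{\partial}{\partial\rho}+\frac{i}{\rho}\frac{\partial}{\partial\varphi}\right),
\]
a direct computation in polar coordinates (of the same kind as the derivative formulas derived earlier in this section) gives
\[
\frac{\partial}{\partial z}\bigl(f_j(\rho)e^{ij\varphi}\bigr)=\tfrac12\Bigl(f_j'+\tfrac{j}{\rho}f_j\Bigr)e^{i(j-1)\varphi},\qquad
\frac{\partial}{\partial\bar z}\bigl(f_j(\rho)e^{ij\varphi}\bigr)=\tfrac12\Bigl(f_j'-\tfrac{j}{\rho}f_j\Bigr)e^{i(j+1)\varphi}.
\]
Thus $\partial/\partial z$ lowers the index by one and $\partial/\partial\bar z$ raises it by one, so it suffices to show that differentiation carries a solution of (\ref{eq1}) to a solution of (\ref{eq1}).

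For this I would verify that both sides of (\ref{eq1}) commute with $\partial/\partial z$ and with $\partial/\partial\bar z$. On the left-hand side the factors $(\zeta_\nu e^{i\alpha})^s$ are independent of $z$, and for a fixed translate $w=\zeta_\nu e^{i\alpha}$ the chain rule gives $\partial_z[f(w+z)]=(\partial_z f)(w+z)$ and likewise for $\partial_{\bar z}$; since the sum over $\nu$ is finite, differentiation passes through the sum, so $\partial_z$ and $\partial_{\bar z}$ commute with the left-hand operator. On the right-hand side the operator $\sum_{p=s}^q \frac{nR^{2p}}{(p-s)!p!}\partial_z^{p-s}\partial_{\bar z}^p$ has constant coefficients and therefore commutes with $\partial_z$ and $\partial_{\bar z}$ as well.

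Consequently, if $g=f_j(\rho)e^{ij\varphi}$ satisfies (\ref{eq1}), then applying $\partial_z$ to both sides and using the two commutations yields that $\partial_z g=\tfrac12\bigl(f_j'+\tfrac{j}{\rho}f_j\bigr)e^{i(j-1)\varphi}$ again satisfies (\ref{eq1}); this is exactly the assertion for the mode $f_{j-1}(\rho)e^{i(j-1)\varphi}$ with $f_{j-1}=\tfrac12\bigl(f_j'+\tfrac{j}{\rho}f_j\bigr)$, and applying $\partial_{\bar z}$ instead produces the mode of index $j+1$ in the same way. I expect the only delicate point to be the bookkeeping of smoothness: each differentiation costs one derivative, so one must check that the shifted modes still lie in the class for which (\ref{eq1}) is meaningful (or restrict to the region where $f$ is real-analytic, as in Lemmas~\ref{l3}--\ref{l4}); one should also note that in the degenerate case $f_j'\pm\frac{j}{\rho}f_j\equiv0$ the corresponding statement holds trivially.
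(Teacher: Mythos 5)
Your proof is correct, and its core idea is the paper's own: differentiation commutes with both sides of (\ref{eq1}) and shifts the Fourier index by one. The genuine difference is in the choice of operator. The paper differentiates with respect to $x$, so that $\frac{\partial}{\partial x}\bigl(f_j(\rho)e^{ij\varphi}\bigr)$ is the \emph{sum} of the two modes $\bigl(f_j'+\tfrac{j}{\rho}f_j\bigr)e^{i(j-1)\varphi}$ and $\bigl(f_j'-\tfrac{j}{\rho}f_j\bigr)e^{i(j+1)\varphi}$ (the paper's relations (\ref{eq8}) and (\ref{eq9}), which match your formulas up to the factor $\tfrac12$ --- immaterial, since (\ref{eq1}) is linear); to pass from ``the derivative satisfies (\ref{eq1})'' to ``each of the two modes separately satisfies (\ref{eq1})'' the paper must separate Fourier modes again, a step that implicitly rests on Lemma \ref{l2} and is not spelled out. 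Your use of $\partial/\partial z$ and $\partial/\partial\bar z$ as pure ladder operators avoids that extra step: each Wirtinger derivative sends $f_j(\rho)e^{ij\varphi}$ to a single mode, so one application of the commutation argument gives the index $j-1$ (via $\partial_z$) and one gives $j+1$ (via $\partial_{\bar z}$). You also make explicit the commutation itself (translation invariance on the left, constant coefficients on the right), which the paper asserts without comment. Your smoothness caveat is well taken, but it is a defect shared with the paper; there it is harmless because the lemma is only ever applied to modes already known to be polynomial (Lemma \ref{l4} and the induction in the main proof).
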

\begin{proof}
We obtaine for $f_{j}(\rho)e^{i j\varphi}$ the following equality
\[
\frac{\partial}{\partial{x}}\left(f_{j}(\rho)e^{i
j\varphi}\right)=\left(f_{j}'+j\frac{f_{j}(\rho)}{\rho}\right)e^{i(j-1)\varphi}+\left(f_{j}'-j\frac{f_{j}(\rho)}{\rho}\right)e^{i(j+1)\varphi},
\]
where
\begin{equation}\label{eq8}
 f_{j}'+j\frac{f_{j}(\rho)}{\rho}=f_{j-1}(\rho).
\end{equation}
 The function $f_{j}(\rho)e^{i j\varphi}$ satisfies (\ref{eq1}) by the
condition. Hence $\frac{\partial}{\partial{x}}\left(f_{j}(\rho)e^{i
j\varphi}\right)$ and $f_{j-1}(\rho)e^{i (j-1)\varphi}$ satisfy
(\ref{eq1}) too.

Similarly,
\begin{equation}\label{eq9}
 f_{j}'-j\frac{f_{j}(\rho)}{\rho}=f_{j+1}(\rho),
\end{equation}
and $f_{j+1}(\rho)e^{i (j+1)\varphi}$ satisfies (\ref{eq1}).
\end{proof}

\section{Proof of the Main Result.}

\begin{proof}
\textbf{Sufficiency}.

It is obviously that the equation (\ref{eq1}) and (\ref{eq3})
coincide for $\beta=0$.

Now Lemma 1 implies the sufficiency  for the main theorem.

\textbf{Necessity}.

We have from equality (\ref{eq8})
\[
f_1'(\rho)+\frac{f_1(\rho)}{\rho}=\overset{q}{\underset{k=0}{\sum}}c_k\rho^{2k}.
\]
Then
\[
f_1(\rho)=\overset{q}{\underset{k=0}{\sum}}c_k\frac{\rho^{2k+1}}{2k+2}+\frac{c}{\rho},
\]
where c is a constant.

 Substitute this function to equation (\ref{eq1}),
assuming that $c_{k,1}=\frac{c_k}{2k+2}$. We find
\[
\overset{n}{\underset{\nu=1}{\sum}}{(\zeta_\nu
e^{i\alpha})}^s\left(\overset{q}{\underset{k=0}{\sum}}c_{k,1}(\zeta_\nu
e^{i\alpha}+z)^{k+1}(\overline{\zeta_\nu}
e^{-i\alpha}+\overline{z})^k+\frac{c}{(\overline{\zeta_\nu}
e^{-i\alpha}+\overline{z})}\right)=
\]
\[
=\overset{q}{\underset{p=s}{\sum}}\frac{nr^{2p}}{(p-s)!p!}{\left(\frac{\partial}{\partial{z}}\right)}^{p-s}{\left(\frac{\partial}{\partial{\bar{z}}}\right)}^p
g(z)+\overset{n}{\underset{\nu=1}{\sum}}\frac{c}{(\overline{\zeta_\nu}
e^{-i\alpha}+\overline{z})},
\]
where $g(z)$ has the form (\ref{eq2}).  Hence it is clear that
equality holds provided that $c=0$ only. So,
$f_1(\rho)=\overset{q}{\underset{k=0}{\sum}}c_{k,1}\rho^{2k+1}$.

 Let
 $f_{j}(\rho)=\overset{q}{\underset{k=0}{\sum}}c_{k,j}\rho^{2k+j}$.
 Then
\[
f_{j+1}'(\rho)+(j+1)\frac{f_{j+1}(\rho)}{\rho}=\overset{q}{\underset{k=0}{\sum}}c_j\rho^{2k+j},
\]
and
$f_{j+1}(\rho)=\overset{q}{\underset{k=0}{\sum}}c_{k,j+1}\rho^{2k+j+1}$.

By induction we have
$f_j(\rho)=\overset{q}{\underset{k=0}{\sum}}c_{k,j}\rho^{2k+j}$.

 Now let us consider  functions
with negative indices. We start with $f_{-1}(\rho)$. From equality
(\ref{eq9}) we have the following
\[
f_{-1}'(\rho)+(-1)\frac{f_{-1}(\rho)}{\rho}=f_0(\rho).
\]
Hence, in the same way we get
\[
f_{-1}(\rho)=\overset{q}{\underset{k=0}{\sum}}c_{k,-1}\rho^{2k+1}.
\]
By induction we have again
\[
f_{-j}(\rho)=\overset{q}{\underset{k=0}{\sum}}c_{k,-j}\rho^{2k+j},
j\in\mathbb{N}.
\]
 This implies that
$f_{-j}(z)=(\overset{q}{\underset{k=0}{\sum}}c_{k,-j}\rho^{2k+j})e^{-i\varphi j}$.
We should carefully consider the following two cases.

1. Let $h\leq{m-1}$.
  Then
\[
 f_{-j}(z)=\overset{h}{\underset{k=0}{\sum}}c_{k,-j}z^{k}\bar{z}^{k+j}.
 \]

 Hence
\[
  \overset{n}{\underset{\nu=1}{\sum}}{(\zeta_\nu e^{i\alpha})}^s\overset{h}{\underset{k=0}{\sum}}c_{k,-j}(\zeta_\nu e^{i\alpha}+z)^k(\bar{z}+\bar{\zeta_\nu}
  e^{-i\alpha})^{k+j}=\overset{\min\{h+j,h+s\}}{\underset{p=s}{\sum}}\frac{nR^{2p}}{(p-s)!p!}{\left(\frac{\partial}{\partial{z}}\right)}^{p-s}{\left(\frac{\partial}{\partial{\bar{z}}}\right)}^pf(z).
\]
In this case $h+j=m-1, h-(m-1)=-j.$

2. Let $h>m-1$. Then
\[
f_{-j}(z)=\overset{m-1}{\underset{k=0}{\sum}}c_{k,-j}z^{k}\bar{z}^{k+j}.
\]
Now,
\[
\overset{n}{\underset{\nu=1}{\sum}}{(\zeta_\nu
e^{i\alpha})}^s\overset{m-1}{\underset{k=0}{\sum}}c_{k,-j}(\zeta_\nu
e^{i\alpha}+z)^k(\bar{z}+\bar{\zeta_\nu}
e^{-i\alpha})^{k+j}=\overset{\min\{m-1+j,m-1+s\}}{\underset{p=s}{\sum}}\frac{nR^{2p}}{(p-s)!p!}{\left(\frac{\partial}{\partial{z}}\right)}^{p-s}{\left(\frac{\partial}{\partial{\bar{z}}}\right)}^pf(z).
\]
Then $m-1+j\neq{m-1}$ and condition of the second case does not fit.

Similarly we analyze functions with positive indices.

1. Again, $h\leq{m-1}$,
$f_j(z)=\overset{h}{\underset{k=0}{\sum}}c_{k,j}z^{k+j}\bar{z}^{k}.$

Then
\[
\overset{n}{\underset{\nu=1}{\sum}}{(\zeta_\nu
e^{i\alpha})}^s\overset{h}{\underset{k=0}{\sum}}c_{k,j}(\zeta_\nu
e^{i\alpha}+z)^{k+j}(\bar{z}+\bar{\zeta_\nu}
e^{-i\alpha})^k=\overset{\min\{h,h+j+s\}}{\underset{p=s}{\sum}}\frac{nR^{2p}}{(p-s)!p!}{\left(\frac{\partial}{\partial{z}}\right)}^{p-s}{\left(\frac{\partial}{\partial{\bar{z}}}\right)}^pf(z).
\]
Now $h\neq{h+j}$ and condition of the first case does not fit.

2. Consider the case $h>m-1$.
\[
\overset{n}{\underset{\nu=1}{\sum}}{(\zeta_\nu
e^{i\alpha})}^s\overset{m-1}{\underset{k=0}{\sum}}c_{k,j}(\zeta_\nu
e^{i\alpha}+z)^{k+j}(\bar{z}+\bar{\zeta_\nu}
e^{-i\alpha})^k=\overset{\min\{m-1,m-1+j+s\}}{\underset{p=s}{\sum}}\frac{nR^{2p}}{(p-s)!p!}{\left(\frac{\partial}{\partial{z}}\right)}^{p-s}{\left(\frac{\partial}{\partial{\bar{z}}}\right)}^pf(z).
\]
Then we have $h-(m-1)=j$.

Combining the argument on the functions $f _j(z)$, we get
\[
f_{-j}(z)=\overset{h}{\underset{k=0}{\sum}}c_{k,-j}z^{k}\bar{z}^{k+j}
\]
and
\[
f_j(z)=\overset{m-1}{\underset{k=0}{\sum}}c_{k,j}z^{k+j}\bar{z}^{k}
.
\]
Now, considering the above and Lemma 2 and 4, we have
\[
f(z)=f_0(z)+f_+(z)+f_-(z),
\]
where
\[f_+(z)=\overset{h-(m-1)}{\underset{j=0}{\sum}}\overset{m-1}{\underset{k=0}{\sum}}c_{k,j}z^{k+j}\bar{z}^{k}=\overset{h}{\underset{l=0}{\sum}}\overset{m-1}{\underset{k=0}{\sum}}c_{k,l}z^l\bar{z}^{k}\;\;
(l=j+k),\]

 \[f_-(z)=\overset{m-1-h}{\underset{j=0}{\sum}}\overset{h}{\underset{k=0}{\sum}}c_{k,-j}z^{k}\bar{z}^{k+j}=\overset{m-1}{\underset{l=0}{\sum}}\overset{h}{\underset{k=0}{\sum}}c_{k,l}z^{k}\bar{z}^l\;\;(l=j+k).\]

So, the desired function $f(z)$ has the form (\ref{eq2}).
\end{proof}

\end{document}